\newtheorem{theorem}{Theorem}
\newtheorem{example}{Example}
\newtheorem{proposition}[theorem]{Proposition}
\newtheorem{remark}{Remark}
\newenvironment{proof}[1][Proof]{\noindent\textbf{#1.} }{\ \rule{0.5em}{0.5em}}
\begin{document}

\title{The limiting behavior of some infinitely divisible exponential
dispersion models}
\author{Shaul K. Bar-Lev\thanks{%
Department of Statistics, University of Haifa, Haifa 31905, Israel (email:
barlev@stat.Haifa.ac.il)} and G\'{e}rard Letac\thanks{%
Laboratoire de Statistique et Probabilit\'{e}s, Universit\'{e} Paul
Sabatier, 31062 Toulouse, France (letac@cict.fr) }}
\maketitle

\begin{abstract}
Consider an exponential dispersion model (EDM) generated by a probability $%
\mu $ on $[0,\infty )$ which is infinitely divisible with an unbounded L\'{e}%
vy measure $\nu $. The Jorgensen set (i.e., the dispersion parameter space)
is then $\mathbb{R}^{+}$, in which case the EDM is characterized by two
parameters: $\theta _{0}$ the natural parameter of the associated natural
exponential family and the Jorgensen (or dispersion) parameter $t$. Denote
by $EDM(\theta _{0},t)$ the corresponding distribution and let $Y_{t}$ is a
r.v. with distribution $EDM(\theta _{0},t)$. Then if $\nu ((x,\infty ))\sim
-\ell \log x$ around zero we prove that the limiting law $F_{0}$ of $%
Y_{t}^{-t}$ as $t\rightarrow 0$ is of a Pareto type (not depending on $%
\theta _{0}$) with the form $F_{0}(u)=0$ for $u<1$ and $1-u^{-\ell }$ for $%
u\geq 1$. Such a result enables an approximation of the distribution of $%
Y_{t}$ for relatively small values of the dispersion parameter of the
corresponding EDM.\medskip\ Illustrative examples are provided.\smallskip

\textit{Keywords}: Exponential dispersion model; infinitely divisible
distributions; limiting distributions; natural exponential family.\medskip

\textit{2000 Mathematics Subject Classification}: 62E20; 60E07.
\end{abstract}

\section{Introduction}

Let $\left\{ F_{t}:0<t<t_{0}\leq \infty \right\} $ be a family of
distributions associated with positive r.v.'s $\left\{ Y_{t}\right\} $ and
Laplace transforms (LT's) $L_{t}(u)=E(e^{-uY_{t}})$. Also let $Y_{0}$ be a
r.v. with distribution $F_{0}$. Bar-Lev and Enis (1987, Theorem 1) showed
that $Y_{t}^{-t}\overset{D}{\rightarrow }Y_{0}$ as $t\rightarrow 0$ (where $%
\overset{D}{\rightarrow }$ designates a convergence in distribution) iff%
\begin{equation}
\lim_{t\rightarrow 0}L_{t}(u^{1/t})=\bar{F}_{0}(u)=1-F_{0}(u)  \label{1}
\end{equation}%
at all continuity points of $F_{0}$. As Bar-Lev and Enis indicated, such a
result can be viewed as a "centralization" problem in the following sense.
In many cases the limiting distribution of $Y_{t}$ as $t\rightarrow 0$ is
degenerate. A measurable transformation $g_{t}(Y_{t})$ is then sought whose
limiting distribution is non-degenerate. Accordingly, their Theorem 1
suggests a consideration of $g_{t}(Y_{t})=Y_{t}^{-t}$ (or, equivalently, of $%
-t\ln Y_{t}$) whose limiting distribution is non-degenerate (provided that (%
\ref{1}) is satisfied). Bar-Lev and Enis presented several examples which
satisfy (\ref{1}). However, these examples heavily depend on the explicit
(and relatively 'nice') form of $L_{t}$.

A natural question then arises: Can one delineate subclasses of
distributions which satisfy (\ref{1}), regardless of the explicit form of $%
L_{t}$? Indeed, in this note we provide such a subclass, namely a subclass
of exponential dispersion models (EDM's) generated by a probability $\mu $
on $[0,\infty )$ which is infinitely divisible of type $1$ (c.f., Jorgensen,
1987, 1997, 2006, and Letac and Mora, 1990). For such a subclass, the EDM is
characterized by two parameters: $\theta _{0}$ the natural parameter of the
associated natural exponential family (NEF) and the Jorgensen (or,
equivalently, the dispersion parameter) $t\in \mathbb{R}^{+}$. We denote
such a subclass of distributions by $EDM(\theta _{0},t)$ and prove that if $%
Y_{t}$ has a distribution in $EDM(\theta _{0},t)$ then $Y_{t}^{-t}\overset{D}%
{\rightarrow }Y_{0}$ as $t\rightarrow 0$, where the distribution $F_{0}$ of $%
Y_{0}$ is of a Pareto type (not depending on $\theta _{0}$) with the form%
\begin{equation*}
F_{0}(u)=\left\{ 
\begin{tabular}{l}
$0,\ \ \ \ \ \ \ \ \ \ $if $u<1,$ \\ 
$1-u^{-\ell },$ if $u\geq 1,$%
\end{tabular}%
\right.
\end{equation*}%
for some $\ell >0$. Such a result enables an approximation of the
distribution of $Y_{t}$ for relatively small values of the dispersion
parameter of the corresponding EDM.\medskip

This note is organized as follows. In Section 2 we first introduce some
preliminaries on NEF's and EDM's and then present our main result. Section 3
is devoted to some examples.

\section{Preliminaries and the main result}

We first briefly introduce some preliminaries related to NEF's and their
associated EDM's. Let $\mu $ be a probability measure on $\mathbb{R}$.
Assume that the effective domain of $\mu $ has a nonempty interior, i.e.,%
\begin{equation*}
\Theta \doteq \mathrm{int}\text{ }D=\left\{ \theta \in \mathbb{R}:L(\theta
)=\int_{\mathbb{R}}e^{-\theta x}\mu (dx)<\infty \right\} \neq \phi .
\end{equation*}%
The NEF generated by $\mu $ is then given by the set of probabilities%
\begin{equation*}
\left\{ P(\theta ,\mu )(dx)=\frac{e^{-\theta x}}{L(\theta )}\mu (dx),\
\theta \in D\right\} .
\end{equation*}%
Note that since $\mu $ is a probability measure then $0\in D$. The Jorgensen
set is defined by%
\begin{equation*}
\Lambda =\left\{ t\in \mathbb{R}^{+}:L^{t}\text{ is a LT of some measure }%
\mu _{t}\text{ on }\mathbb{R}\right\} ,
\end{equation*}%
whereas the corresponding EDM is the class of probabilities%
\begin{equation}
\left\{ P(\theta ,t,\mu _{t})(dx)=\frac{e^{-\theta x}}{L^{t}(\theta )}\mu
_{t}(dx),\theta \in D,t\in \Lambda \right\} ,  \label{2}
\end{equation}%
Note that the class of EDM's is abundant since any probability measure with
a LT generates an EDM. An EDM is therefore parameterized by the two
parameters $(\theta ,t)\in D\times \Lambda $, where $\theta $ is the natural
parameter of the corresponding NEF and $t$ is termed the dispersion
parameter. EDM's have a variety of applications in various areas, in
particular, in generalized linear models (replacing the normal model as the
error model distribution) and actuarial studies. Note that if $\mu $ is
infinitely divisible then the Jorgensen set (or, equivalently, the
dispersion parameter space) is $\Lambda =\mathbb{R}^{+}$. Also note that if $%
Y_{t}$ is a r.v. with distribution in (\ref{2}) then its LT is given by%
\begin{equation}
E(e^{-sY_{t}})=\left( \frac{L(\theta +s)}{L(\theta )}\right) ^{t}.  \label{3}
\end{equation}

Now we consider the case where $\mu $ is infinitely divisible law of type 1
concentrated on $\mathbb{R}^{+}.$ By this we mean that there exists an
unbounded positive measure $\nu $ on $(0,\infty )$ such that for $\theta
\geq 0$ one has $L(\theta )=\int_{0}^{\infty }e^{-\theta x}\mu (dx)\doteq
e^{k(\theta )}$ with $k(\theta )=-\int_{0}^{\infty }(1-e^{-\theta x})\nu
(dx) $ and $\int_{0}^{\infty }\min (1,x)\nu (dx)<\infty .$ Therefore $\nu $
is the L\'{e}vy measure of $\mu .$ The L\'{e}vy process associated with such
a $\mu $ is sometimes called a pure jump subordinator (in this respect, of L%
\'{e}vy measures for NEF's, see also Kokonendji and Khoudar, 2006). Note
that this implies that $\lim_{\theta \rightarrow \infty }k(\theta )=-\infty $
since $\nu $ is unbounded and therefore $\lim_{\theta \rightarrow \infty
}L(\theta )=0$ and $\mu (\{0\})=0.$ We are now ready to present our main
result relating to the limiting distribution of $Y_{t}^{-t}$ as $%
t\rightarrow 0$.

\begin{proposition}
\vspace{4mm}Assume that $\mu $ is an infinitely divisible probability
measure of type $1$.\ Also assume that $G(x)\doteq \int_{x+}^{\infty }\nu
(dy)=\nu ((x,\infty ))$ is such that 
\begin{equation*}
\lim_{x\rightarrow 0}G(x)/\log x=-\ell
\end{equation*}%
for some $\ell >0.$ Let $\theta _{0}\geq 0$, then 
\begin{equation}
\lim_{t\rightarrow 0}\left( \frac{L(\theta _{0}+u^{1/t})}{L(\theta _{0})}%
\right) ^{t}=\left\{ 
\begin{tabular}{l}
$1,\ \ \ \ \ \ \ \ \ \ $if $u<1,$ \\ 
$u^{-\ell },$ if $u\geq 1,$%
\end{tabular}%
\right.  \label{4}
\end{equation}%
implying by (\ref{1}) that $Y_{t}^{-t}\overset{D}{\rightarrow }Y_{0}$ as $%
t\rightarrow 0$, where the distribution $F_{0}$ of $Y_{0}$ is given by 
\begin{equation*}
F_{0}(u)=\left\{ 
\begin{tabular}{l}
$0,\ \ \ \ \ \ \ \ \ \ $if $u<1,$ \\ 
$1-u^{-\ell },$ if $u\geq 1,$%
\end{tabular}%
\right.
\end{equation*}
\end{proposition}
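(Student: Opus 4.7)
Since $L=e^{k}$, the quantity in (\ref{4}) equals $\exp\bigl(t\,[k(\theta_{0}+u^{1/t})-k(\theta_{0})]\bigr)$, so the whole problem reduces to computing the limit of the exponent as $t\to 0^{+}$. My plan is to split into the cases $u<1$, $u=1$, and $u>1$. The first two are easy: for $u<1$ we have $u^{1/t}\to 0$, and continuity of $k$ at $\theta_{0}$ makes the bracket tend to $0$, so $t\cdot[\,\cdot\,]\to 0$ and the limit is $1$; for $u=1$ the expression is simply $(L(\theta_{0}+1)/L(\theta_{0}))^{t}$, which tends to $1$. Both values agree with $u^{-\ell}$ on the indicated range.

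The substantive case is $u>1$, for which $\theta := \theta_{0}+u^{1/t}\to\infty$ as $t\to 0^{+}$. Rewriting $t=\log u/\log(u^{1/t})$ one obtains
\begin{equation*}
t\,[k(\theta_{0}+u^{1/t})-k(\theta_{0})]
 = (\log u)\,\frac{k(\theta)-k(\theta_{0})}{\log(u^{1/t})},
\end{equation*}
and since $\log\theta/\log(u^{1/t})\to 1$ while $k(\theta_{0})$ is a finite constant, the whole proposition reduces to the single asymptotic
\begin{equation*}
\lim_{\theta\to\infty}\frac{k(\theta)}{\log\theta}=-\ell.
\end{equation*}
Granting this, the exponent converges to $-\ell\log u$ and the right-hand side of (\ref{4}) becomes $u^{-\ell}$, as desired; the distributional conclusion about $Y_{t}^{-t}$ then follows directly from (\ref{1}).

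To establish the asymptotic I would integrate by parts in the L\'{e}vy--Khintchine integral, writing $\nu(dx)=-dG(x)$. The boundary terms vanish (at $\infty$ because $G\to 0$, at $0$ because $1-e^{-\theta x}\sim\theta x$ kills the logarithmic blow-up of $G$ coming from the hypothesis), yielding
\begin{equation*}
-k(\theta)=\int_{0}^{\infty}\theta e^{-\theta x}G(x)\,dx
 =\int_{0}^{\infty}e^{-y}G(y/\theta)\,dy
\end{equation*}
after the substitution $y=\theta x$. Dividing by $\log\theta$ and combining $G(x)/\log x\to -\ell$ with the identity $\log(y/\theta)/\log\theta \to -1$, the integrand converges pointwise to $\ell e^{-y}$. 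The main technical step---and the principal obstacle---is to dominate the integrand uniformly in $\theta$: since $G$ is nonincreasing and, by hypothesis, $G(x)\leq C|\log x|$ for $x$ near $0$, one can exhibit a bound of the form $A(1+|\log y|)e^{-y}$, which is integrable on $(0,\infty)$. Dominated convergence then gives $-k(\theta)/\log\theta\to\ell\int_{0}^{\infty}e^{-y}\,dy=\ell$, completing the proof.
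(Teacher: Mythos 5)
Your proof is correct, but it diverges from the paper's in two substantive ways, both worth noting. The shared core is the integration-by-parts identity $k(\theta)=-\theta\int_0^\infty e^{-\theta x}G(x)\,dx$ and the reduction of the case $u>1$ to the single asymptotic $\lim_{\theta\to\infty}k(\theta)/\log\theta=-\ell$. First, to prove that asymptotic the paper runs an $\epsilon$--$\eta$ sandwich argument: it splits the integral at a point $\eta$ below which $-(\ell-\epsilon)\log x<G(x)<-(\ell+\epsilon)\log x$, shows the tail $\theta\int_\eta^\infty e^{-\theta x}G(x)\,dx\le G(\eta)e^{-\eta\theta}\to0$, and evaluates $\frac{1}{\log\theta}\theta\int_0^\eta e^{-\theta x}\log x\,dx\to-1$ by the substitution $v=\theta x$; you instead substitute $y=\theta x$ in the whole integral and invoke dominated convergence, with dominating function $A(1+|\log y|)e^{-y}$ obtained from the monotonicity of $G$ plus the hypothesis near $0$. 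Your route is shorter and cleaner; its only cost is that the domination (which you rightly flag as the main step) needs the case split $y/\theta<\delta$ versus $y/\theta\ge\delta$ written out explicitly: in the first case $G(y/\theta)\le C(|\log y|+\log\theta)$ so the quotient by $\log\theta$ is at most $C(1+|\log y|)$ once $\log\theta\ge 1$, and in the second case $G(y/\theta)\le G(\delta)$, so the bound does go through. Second, and more significantly, you dispose of $\theta_0>0$ with no extra analysis: since $t\,k(\theta_0)\to0$ and $\log(\theta_0+u^{1/t})/\log(u^{1/t})\to1$, the asymptotic established at $\theta_0=0$ already yields the limit for every $\theta_0\ge0$. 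The paper instead tilts the L\'{e}vy measure, proving via a second integration by parts that $G_{\theta_0}(x)=k_x(\theta_0)+G(x)$, hence that $\nu_{\theta_0}(dx)=e^{-\theta_0 x}\nu(dx)$ satisfies the same logarithmic tail hypothesis, and then re-applies the $\theta_0=0$ case. Your observation renders that whole step unnecessary; what the paper's tilting buys in exchange is the (unneeded for the proposition, but mildly interesting) fact that every member of the associated exponential family satisfies the same tail condition with the same $\ell$.
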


\begin{proof}
We first prove (\ref{4}) for $\theta _{0}=0.$ For this, we give another
presentation of $k$ in terms of $G$ which is obtained by an integration by
parts. For $\epsilon >0$ consider the Stieltjes integral 
\begin{equation}
k_{\epsilon }(\theta )=-\int_{\epsilon +}^{\infty }(1-e^{-\theta x})\nu
(dx)=(e^{-\theta \epsilon }-1)G(\epsilon )-\theta \int_{\epsilon }^{\infty
}e^{-\theta x}G(x)dx.  \label{KE}
\end{equation}%
Since for $\epsilon \rightarrow 0$ we have $(e^{-\theta \epsilon }-1)\sim
-\theta \epsilon $ and $G(\epsilon )\sim -\ell \log \epsilon $ we get 
\begin{equation}
k(\theta )=\lim_{\epsilon \rightarrow 0}k_{\epsilon }(\theta )=-\theta
\int_{0}^{\infty }e^{-\theta x}G(x)dx.  \label{KG}
\end{equation}%
If $0<u<1$ we have $\lim_{t\rightarrow 0}u^{1/t}=0.$ Since $\mu $ is a
probability $L(0)=1$ and thus $\lim_{t\rightarrow 0}L(u^{1/t})^{t}=1.$ If $%
u=1$ we also have that $\lim_{t\rightarrow 0}L(1)^{t}=1.$ If $u>1$, We fix
an arbitrary $0<\epsilon <\ell $. By the definition of $\ell $ there exists $%
0<\eta <1$ such that if $0<x<\eta $ then $-(\ell -\epsilon )\log
x<G(x)<-(\ell +\epsilon )\log x.$ We now use (\ref{KG}) for writing 
\begin{equation}
\left\vert k(\theta )+\theta \int_{\eta }^{\infty }e^{-\theta x}G(x)dx-\ell
\theta \int_{0}^{\eta }e^{-\theta x}\log xdx\right\vert <-\epsilon \theta
\int_{0}^{\eta }e^{-\theta x}\log xdx  \label{ENC}
\end{equation}%
and observing that 
\begin{equation}
\theta \int_{\eta }^{\infty }e^{-\theta x}G(x)dx\leq G(\eta )\theta
\int_{\eta }^{\infty }e^{-\theta x}dx=G(\eta )e^{-\eta \theta }\underset{%
\theta \rightarrow \infty }{\rightarrow }0.  \label{EG}
\end{equation}%
We need now the following evaluation. For $\eta >0$ we have 
\begin{equation}
\lim_{\theta \rightarrow \infty }\frac{1}{\log \theta }\theta \int_{0}^{\eta
}e^{-\theta x}\log xdx=-1.  \label{EAS}
\end{equation}%
To prove (\ref{EAS}), we obtain, by a change of variable to $v=\theta x$,
that 
\begin{equation}
\frac{1}{\log \theta }\theta \int_{0}^{\eta }e^{-\theta x}\log xdx=\frac{1}{%
\log \theta }\int_{0}^{\eta \theta }e^{-v}\log vdv-\int_{0}^{\eta \theta
}e^{-v}dv\rightarrow _{\theta \rightarrow \infty }=0-1,  \label{5}
\end{equation}%
where the last term on right hand side of (\ref{5}) follows since $%
\int_{0}^{\infty }e^{-v}\log vdv$ converges and $\int_{0}^{\infty
}e^{-v}dv=1.$ We now divide both sides of (\ref{ENC}) by $\log \theta $ and
let $\theta \rightarrow \infty .$ From (\ref{EG}) and (\ref{EAS}) we get
that for all $\epsilon >0$ 
\begin{equation*}
-\ell -\epsilon \ell \leq \liminf_{\theta \rightarrow \infty }\frac{1}{\log
\theta }k(\theta )\leq \limsup_{\theta \rightarrow \infty }\frac{1}{\log
\theta }k(\theta )\leq -\ell +\epsilon \ell ,
\end{equation*}%
i.e., $\lim_{\theta \rightarrow \infty }\frac{1}{\log \theta }k(\theta
)=-\ell .$ Applying this to $\theta =u^{1/t}$ with a fixed $u>1$ and letting 
$t\rightarrow 0$, we get 
\begin{equation*}
\lim_{t\rightarrow 0}tk(u^{1/t})=-\ell \log u,\ \ \lim_{t\rightarrow
0}(L(u^{1/t}))^{t}=\frac{1}{u^{\ell }},
\end{equation*}%
which is the desired result. Finally suppose that $\theta _{0}>0$ and denote 
$k_{\theta _{0}}(\theta )=k(\theta _{0}+\theta )-k(\theta _{0}).$ Trivially, 
\begin{equation*}
k_{\theta _{0}}(\theta )=-\int_{0}^{\infty }(1-e^{-\theta x})\nu _{\theta
_{0}}(dx)\text{ with }\nu _{\theta _{0}}(dx)=e^{-\theta _{0}x}\nu (dx)
\end{equation*}%
and consider 
\begin{eqnarray}
G_{\theta _{0}}(x) &=&\nu _{\theta _{0}}((x,\infty ))=\int_{x+}^{\infty
}e^{-\theta _{0}y}\nu (dy)  \notag \\
&=&e^{-\theta _{0}x}G(x)-\theta _{0}\int_{x}^{\infty }e^{-\theta _{0}y}G(y)dy
\label{IP2} \\
&=&k_{x}(\theta _{0})+G(x)  \label{KE2}
\end{eqnarray}%
Line (\ref{IP2}) is obtained by an integration by parts, whereas line (\ref%
{KE2}) uses (\ref{KE}). We see easily from (\ref{KE2}) that $%
\lim_{x\rightarrow 0}G_{\theta _{0}}(x)/\log x=-\ell .$ Therefore we are in
the same situation as in the proof of the first part with $\theta _{0}=0$,
and thus the proof is completed.\medskip
\end{proof}

The following remark is useful to obtain a genralization of the three
examples presented in Section 3 for any $\ell >0$.

\begin{remark}
Suppose that $(\mu _{t})_{t>0}$ is a family of infinitely divisible
distributions with $\mu _{t}\ast \mu _{s}=\mu _{t+s},$ where $t,s>0.$ Assume
that $\mu _{1}$ fulfills the premises of Proposition 1 with $G(x)\sim -\ell
\log x.$ Then obviously for any fixed $t>0$, $\mu _{t}$ also fulfills such
premises with $t\ell $ replacing the role of $\ell $. In all of the examples
below, we have $\ell =1$ and this remark shows how to get from them other
examples with arbitrary $\ell >0.$
\end{remark}

\section{\protect\vspace{4mm}\noindent \textbf{Examples}}

\begin{example}
If $\mu (dx)=e^{-x}1_{(0,\infty )}(x)dx$ the corresponding infinitely
divisible family is the gamma family with scale parameter 1. The L\'{e}vy
measure here is $\nu (dx)=e^{-x}1_{(0,\infty )}(x)\frac{dx}{x}$ and $\ell
=1. $
\end{example}

\begin{example}
A discrete example is 
\begin{equation*}
\nu (dx)=\sum_{n=1}^{\infty }\delta _{1/n}.
\end{equation*}%
We have $k(\theta )=\sum_{n=1}^{\infty }\frac{1}{n}(1-e^{-\theta /n})$, $%
G(x)=\sum_{n=1}^{[1/x]}1/n\sim -\log x$ if $x\rightarrow 0$ and $\ell =1.$%
The probability $\mu $ is the distribution of $\sum_{n=1}^{\infty }\frac{%
X_{n}}{n},$ where $X_{n}$ is Poisson distributed with mean $1/n$ and the $%
(X_{n})_{n=1}^{\infty }$ are independent.
\end{example}

\begin{example}
Utilizing Example 2.2 in \cite{BE1987}, consider the infinitely divisible
distribution $\mu $ on $(0,\infty )$ with Laplace transform defined on $%
\theta \geq 0$ given by $\theta +1-\sqrt{\theta ^{2}+2\theta }.$ Note that
the densities of the corresponding EDM are Bessel densities related to a
symmetric random walk (see Feller, 1971, pp. 60-61).
\end{example}

The related L\'{e}vy measure of $\mu $ is 
\begin{equation}
\nu (dx)=\ _{1}F_{1}(1/2;1;-2x)\mathbf{1}_{(0,\infty )}(x)\frac{dx}{x},
\label{KM}
\end{equation}%
where $_{1}F_{1}(a;b;z)$ is the so called confluent entire function defined
by $\sum_{n=0}^{\infty }\frac{(a)_{n}}{n!(b)_{n}}z^{n},$ where $(a)_0=1$ and 
$\left( a\right) _{n+1}=(a+n)(a)_n$ define the Pochhammer symbol $(a)_n.$
Therefore $\nu $ has a density equivalent to $1/x$ when $x\rightarrow 0$
which implies that $G(x)\sim -\log x.$ Proposition 1 is thus satisfied with $%
\ell =1.$ To check the correctness of (\ref{KM}) observe that if $k(\theta
)=\log (\theta +1-\sqrt{\theta ^{2}+2\theta })$ then 
\begin{equation*}
k^{\prime }(\theta )=-\int_{0}^{\infty }e^{-\theta x}x\nu (dx)=-\frac{1}{%
\sqrt{\theta +2}}\frac{1}{\sqrt{\theta }}=-\int_{0}^{\infty }e^{-\theta
x}f(x)dx\times \int_{0}^{\infty }e^{-\theta y}g(y)dy,
\end{equation*}%
where 
\begin{equation*}
f(x)=e^{-2x}\frac{1}{\sqrt{\pi x}}\mathbf{1}_{(0,\infty )}(x),\ g(y)=\frac{1%
}{\sqrt{\pi y}}\mathbf{1}_{(0,\infty )}(y).
\end{equation*}%
Therefore the density of $x\nu (dx),$ $x>0$, is given by the convolution
product $f\ast g$, where by a change of variable $y=tx$ and employing a
Taylor expansion, one gets 
\begin{equation*}
\int_{0}^{x}f(y)g(x-y)dy=\frac{1}{\pi }\int_{0}^{1}\frac{e^{-2xt}}{\sqrt{%
t(1-t)}}dt=\ _{1}F_{1}(1/2;1;-2x).
\end{equation*}
Let us fix $t>0.$ Recall (see \cite{Feller1971}) that for $\theta>0$ the function $(\theta +1-\sqrt{\theta ^{2}+2\theta })^t$ is the Laplace transform of the density $f_t(x)=\frac{t}{x}e^{-x}I_t(x)\textbf{1}_{(0,\infty)}(x)$ where the Bessel function $I_t(x)$ is $$I_t(x)=\sum_{n=0}^{\infty}\frac{1}{n!\Gamma(n+1+t)}\frac{x^{2n+t}}{2^{n+t}}.$$ We fix now $\ell>0$. If $Y_t$ has density $f_{t\ell}$ this implies that the density of $U=Y_t^{-t}$
is $$g_t(u)=\ell t^2u^{2t+1}e^{-\frac{1}{u^t}}I_{t\ell}(\frac{1}{u^t})\textbf{1}_{(0,\infty)}(u).$$ It would be quite delicate to prove directly from this last formula that when $t\rightarrow 0$ the law $g_t(u)du$ converges  to the Pareto law $$\textbf{1}_{(1,\infty)}(u)\frac{\ell du}{u^{\ell+1}}$$ as shown by our Proposition.

\end{document}